\documentclass{amsart}
\usepackage[utf8]{inputenc}
\usepackage{games}
\usepackage{amssymb}
\usepackage{url}

\newtheorem{theorem}{Theorem}
\newtheorem{corollary}{Corollary}

\newtheorem{lemma}{Lemma}
\theoremstyle{definition}
\newtheorem{definition}{Definition}
\newtheorem{example}{Example}
\newtheorem{remark}{Remark}

\newtheorem{notation}{Notation}

\newcommand{\setof}[1]{\left\{#1\right\}}
\newcommand{\suchthat}{~\colon}
\newcommand{\abs}[1]{\left\vert #1 \right\vert}

\newcommand{\I}{{\boldsymbol{I}}}
\newcommand{\II}{{\boldsymbol{I}\kern-0.05cm\boldsymbol{I}}}
\newcommand{\res}{\mathord \upharpoonright}
\newcommand{\conc}{{}^\smallfrown}

\newcommand{\R}{\mathbb{R}}
\newcommand{\N}{\mathbb{N}}
\newcommand{\ww}{\omega^\omega}

\newcommand{\ad}{\mathsf{AD}}
\newcommand{\pd}{\mathsf{PD}}
\newcommand{\concat}{{}^\smallfrown}

\newcommand{\diam}{\text{diam}}

\newcommand{\ceil}[1]{\left\lceil#1\right\rceil}

\newcommand{\dom}{\text{dom}}
\DeclareMathOperator{\HD}{HD}
\newcommand{\ac}{\mathsf{AC}}

\newcommand{\zf}{\mathsf{ZF}}

\newcommand{\bP}{\boldsymbol{\Pi}}

\newcommand{\dc}{\mathsf{DC}}
\newcommand{\bS}{\boldsymbol{\Sigma}}
\newcommand{\bD}{\boldsymbol{\Delta}}
\newcommand{\lh}[1]{\abs{#1}}

\newcommand{\cylinder}[1]{[#1]}
\newcommand{\cof}{\text{cof}}

\title{Hausdorff Dimension Regularity Properties and Games}
\author{Logan Crone}
\address{Logan Crone, University of North Texas, Department of Mathematics, 1155 Union Circle
  \#311430, Denton, TX 76203-5017, USA}
\email{logancrone@my.unt.edu}
\author{Lior Fishman}
\address{Lior Fishman, University of North Texas, Department of Mathematics, 1155 Union Circle \#311430, Denton, TX 76203-5017, USA}
\email{lior.fishman@unt.edu}
\author{Stephen Jackson}
\address{Stephen Jackson, University of North Texas, Department of Mathematics, 1155 Union Circle \#311430, Denton, TX 76203-5017, USA}
\email{stephen.jackson@unt.edu}

\begin{document}

\begin{abstract}
The Hausdorff $\delta$-dimension game was introduced in \cite{DFSU_Arxiv}
and shown to characterize sets in $\R^d$ having Hausdorff
dimension $\leq \delta$. We introduce a variation of this game which also characterizes Hausdorff dimension
and for which we are able to prove an unfolding
result similar to the basic unfolding property for the Banach-Mazur game for category.
We use this to derive a number of consequences for Hausdorff dimension. We show that under $\ad$ any wellordered
union of sets each of which has Hausdorff dimension $\leq \delta$ has dimension $\leq \delta$. We establish a
continuous uniformization result for Hausdorff dimension. The unfolded game also provides a new proof
that every $\bS^1_1$ set of Hausdorff dimension $\geq \delta$ contains a compact subset of dimension
$\geq \delta'$ for any $\delta'<\delta$, and this result generalizes to arbitrary sets under $\ad$.
\end{abstract}

\maketitle

\section{Introduction} \label{sec:intro}

Category, measure, and Hausdorff dimension are three fundamental notions of largeness
for sets in a Polish space (for Hausdorff dimension one most commonly restricts to
subsets of $\R^d$). In the case of category the notion is connected to a well-known game,
the Banach-Mazur or $**$-game (see for example \cite{KechrisBook} for a discussion of the
game and related notions; we note that for the Banach-Mazur game $G^{**}(A)$ for a set $A$
it is conventional to have player $\II$
being the player trying  to get into the set $A$). For example, in any Polish space $X$, a set $A\subseteq X$ is comeager
iff $\II$ has a winning strategy in the Banach-Mazur game $G^{**}(A)$, and $\I$ has a winning
strategy iff there is a neighborhood on which $A$ is meager. An important aspect of this game is that it permits an
{\em unfolding}. By this we mean that if $A=\dom(R)$ where $R\subseteq X\times Y$, then if $\I$
has a winning strategy in the game $G^{**}(R)$ then $\I$ has a winning strategy in the game
$G^{**}(A)$. Assuming the game is determined, this says that if $\II$ can win the game $G^{**}(A)$,
then $\II$ can actually win the game $G^{**}(R)$ in which $\II$ is not only produces an $x \in A$
but also a pair $(x,y)\in R$, that is, where $y$ ``witnesses'' that $x \in A$.

This unfolding phenomenon
for the $**$-game has many applications to category. For example, since $\bS^1_1$ sets $A\subseteq X$
are projections of closed sets $F\subseteq X\times \ww$, this reduces the Banach-Mazur game for $\bS^1_1$ sets
to the game for closed sets, which are determined in $\zf$. This gives a proof of the fact that
every $\bS^1_1$ set in a Polish space has the Baire property. Another application of the unfolding is
to show continuous uniformizations on comeager sets. Namely, suppose $R\subseteq X\times Y$ and
$A =\dom(R)$ is comeager. If we assume $\ad$, then there is a comeager set $C\subseteq A$ and a continuous function
$f \colon C\to Y$ which {\em uniformizes} $R$, that is, for all $x \in A$ we have $R(x,f(x))$.
Working just in $\zf$ we get that if $R$ is $\bS^1_1$ then there is a continuous uniformization on
a comeager set (this requires unfolding the game on $R$ to a closed set $F\subseteq X\times Y\times \ww$).
Yet another application of unfolding is to establish the full additivity of category under $\ad$.
By this we mean the statement that a wellordered union on meager sets is meager. The most common proof
given for this uses the analog of Fubini's theorem for category, the Kuratowski-Ulam theorem.
However, a different proof can be given using the unfolded game. This is important as there is no Fubini
theorem for Hausdorff $\delta$-dimension measure, and we wish to establish this additivity result for Hausdorff dimension
(Theorem~\ref{thm:wou}).

In \cite{Martin1998} (see also \cite{RosendalNotes2009}) the {\em Measure game} was introduced which was shown to
characterize Lebesgue measure
in a manner similar to how the Banach-Mazur game characterizes category.
In \cite{CFJSS2019} a variation of this game was introduced and an unfolding result for it was proved.
This game analysis had several applications. Aside from giving new proofs of some classical results such as the
Borel-Cantelli lemma, a strong form of the R\'{e}nyi-Lamperti lemma of probability theory was shown using the game.

In \cite{DFSU_Arxiv} a game, the Hausdorff $\delta$-dimension game was introduced, and it was shown that this game
characterizes when a set $A\subseteq \R^d$ has Hausdorff dimension $\HD(A)\leq \delta$.
More precisely, if $\I$ wins the game then $\HD(A)\geq \delta$ and if $\II$ wins the game then
$\HD(A)\leq \delta$. In this paper we introduce a variation of this game which we show also
characterizes Hausdorff dimension in this manner, and for which we are able to prove an
unfolding result (Theorem~\ref{thm:unfthm}). As with measure and category, this has a number of consequences.
This gives a new proof of the basic regularity result that every $\bS^1_1$ set $A\subseteq \R^d$ with
$\HD(A)\geq \delta$, if $\delta' <\delta$ then $A$  contains a contains a compact set $K$
with $\HD(K)\geq \delta'$.
The classical proof of this fact uses an ``increasing sets lemma'' for Hausdorff $\delta$-measure
(see Theorems~47 and 48 of \cite{RogersBook}).
Moreover, this result extends to other pointclasses assuming the determinacy
of the corresponding games. For example, assuming $\bP^1_1$-determinacy we get the same regularity
result for $\bS^1_2$ sets. We are able to prove continuous uniformization theorems, (see
Theorem~\ref{thm:cut} and the following remarks)
again assuming the determinacy of the relevant games. Finally, using the unfolded game
we are able to show that under $\ad$ we have full additivity for Hausdorff dimension $\leq \delta$
sets. That is, any wellordered union (of any length) of sets each of which has Hausdorff dimension
$\leq \delta$ has Hausdorff dimension $\leq \delta$. This complements the corresponding
results for category and measure, which are known theorems from $\ad$.

Throughout, we will be working in a Euclidean space $\R^d$. We let $\mathcal{H}^s$ denote
$s$-dimensional Hausdorff measure on $\R^d$. For $A\subseteq \R^d$ we let
$\HD(A)$ denote the Hausdorff dimension of $A$. This is defined for all sets $A\subseteq \R^d$,
and $0 \leq \HD(A)\leq d$. 
We recall that $\mathcal{H}^s$ is a Borel measure on $\R^d$,
but it is not $\sigma$-finite (unless $s=d$). We let $\omega=\N$ denote the natural numbers
and $\ww$ denote the Baire space (set of sequences of natural numbers) with the usual
product of the discrete topologies on $\omega$.

The following theorem is a well-known tool in the theory of Hausdorff dimension,
and is also central to our arguments. We include a proof partly for the sake of
completeness, and also because we wish to be able to use our results  in models
of determinacy where $\ac$ fails. In the following proof we show that only
countable choice $\ac_\omega$ is needed, and thus we can in particular use this result in any model of $\zf+\dc$.

\begin{theorem}[Rogers-Taylor-Tricot \cite{RogersBook}). ($\zf+\mathsf{AC_\omega}$]\label{thm:rtt}
Let $\mu$ be a Borel probability measure on $\mathbb{R}^d$.
\begin{enumerate}
\item[\textbf{(i)}] If $A \subseteq \mathbb{R}^d$ and $\displaystyle \limsup_{r \to 0} \frac{\mu(B(x, r))}{r^s} < m$
for every $x \in A$, then 
\[\mathcal{H}^s(A) \geq m^{-1}\mu^*(A).\]
\item[\textbf{(ii)}] If $A \subseteq \mathbb{R}^d$ and $\displaystyle \limsup_{r \to 0} \frac{\mu(B(x, r))}{r^s} >m $
for every $x \in A$, then 
\[\mathcal{H}^s(A) \leq c_d m^{-1}\mu^*(A)\] where $c_d$ is a constant depending only on $d$.
\end{enumerate}
In these statements, $\mathcal{H}^s(A)$ refers to the Hausdorff $s$-dimensional outer measure
of $A$, and $\mu^*(A)$ refers to the outer $\mu$-measure of $A$. 
\end{theorem}
\begin{proof}
\textbf{(i)} Let $A_\epsilon = \setof{x \in A \suchthat \sup_{0 < r <
\epsilon} \frac{\mu(B(x, r))}{r^s} < m}$, and note that
$\bigcup_{\epsilon>0} A_\epsilon = A$ and that $\mu^*(A_\epsilon) \to
\mu^*(A)$ as $\epsilon \to 0$ (note that the $A_\epsilon$ are increasing as $\epsilon \to 0$ and
for any Borel probabillity measure $\mu$ and increasing sequence of sets $C_n$
we have that $\mu^*(\bigcup_nC_n)=\lim_n \mu^*(C_n)$).
Fix $\epsilon > 0$ and let
$\setof{B_i}_{i \in \omega}$ be a cover of $A_\epsilon$.  Suppose
further that each $B_i$ intersects $A_\epsilon$ and that each $B_i$
has diameter $r_i < \epsilon$.  For each $i$, let $x_i \in B_i \cap
A_\epsilon$, then we have that the sequence $\setof{B(x_i, r_i)}_{i
\in \omega}$ is also a cover of $A_\epsilon$ and that for each $i$,
$\mu(B(x_i, r_i)) < m r_i^s$.  Thus
\[\sum_i r_i^s \geq m^{-1} \sum_i \mu(B(x_i, r_i)) \geq m^{-1} \mu^*(A_\epsilon).\]

Since $\mathcal{H}^s_\epsilon(A_\epsilon)$ is the greatest lower bound of all the quantities
$\sum_i r_i^s$ for such covers of $A_\epsilon$, and we know $m^{-1} \mu^*(A_\epsilon)$ is a
fixed lower bound for each such sum, we have that
\[\mathcal{H}^s(A) \geq \mathcal{H}^s_\epsilon(A) \geq \mathcal{H}^s_\epsilon(A_\epsilon) \geq m^{-1} \mu^*(A_\epsilon)\]

Since $\mu^*(A_\epsilon) \to \mu^*(A)$, we have the desired inequality.

\textbf{(ii)} Let $\epsilon>0$ and let $U$ be any open set containing $A$.
Let \[\mathcal{Q}_m = \setof{Q \subseteq \mathbb{R}^d \suchthat Q~\text{is a dyadic cube with side length}~\frac{1}{2^m}}.\]
For each $m$ and each $x \in A$, let 
\begin{equation*}
\begin{split}\mathcal{S}_m(x)= \big\{Q \in \mathcal{Q}_m \suchthat \exists r>0,
&~ r/2 < \diam(Q)\leq r < \epsilon ~\wedge\\
& Q \cap B(x, r) \neq \emptyset ~\wedge\\
& B(x, 2r) \subseteq U ~\wedge\\
& \mu(B(x, r)) > mr^s
\big\}
\end{split}
\end{equation*}
and let
\begin{equation*}
\begin{split}\mathcal{T}_m(x)= \big\{Q \in \mathcal{S}_m(x) \suchthat \forall Q' \in \mathcal{S}_m(x),~ \mu(Q') \leq \mu(Q)
\big\}
\end{split}
\end{equation*}

We note that $\mathcal{S}_m(x)$ is always finite, and since $x \in A$
there must be some $m$ so that $\mathcal{S}_m(x)$ is nonempty, and so
for some $m$, $\mathcal{T}_m(x)$ is also finite and nonempty.  For
each $x$, let $m(x)$ be minimal so that $T_{m(x)}(x)$ is nonempty, and
let $Q_x \in \mathcal{T}_{m(x)}(x)$ be of minimal index in some fixed
enumeration of the dyadic cubes.  For each dyadic cube $Q$ (of which
there are only countably many) which is equal to some $Q_x$, we choose
$x(Q)$ so that $Q = Q_{x(Q)}$ and choose some witness $r(Q)>0$ to the
fact that $Q \in S_{m(x(Q))}(x(Q))$.

For such $Q$, we have $r(Q)/2<\diam(Q)$, thus we know that the side
length of $Q$ is $\frac{1}{2^{m(x(Q))}} \geq \frac{r(Q)}{2\sqrt{d}}$.
Let $N_d=\ceil{1+12\sqrt{d}}$ and note that if $k$ is the side length
of $Q$, then $kN_d\geq r(Q)\frac{1+12\sqrt{d}}{2\sqrt{d}} > 6r(Q)$ and
thus $B(x(Q), r(Q)) \subseteq Q^*$, where $Q^*$ is $Q$ scaled by
$N_d$. and thus $B(x(Q), r(Q))$ can be covered by $N_d^d$ dyadic cubes
of the same side length as $Q$, of which $Q$ has maximal $\mu$-measure
(since we can discard any dyadic cubes from the cover which do not
intersect $B(x(Q), r(Q))$.

So for each dyadic cube $Q=Q_{x(Q)}$, we have
\[\mu(Q) \geq N_d^{-d}\mu(B(x(Q), r(Q))) > N_d^{-d} m r(Q)^s\]
and since each such $Q$ is contained in $U$, we have
\[\mu(U) \geq \sum_{Q} \mu(Q) \geq N_d^{-d} m \sum_{Q} r(Q)^s.\]
Now also the collection of cubes $Q^*$ form a cover of $A$ (since if $Q=Q_x$,
the enlarged $Q^*$ must contain $x$, even if $x \neq x(Q)$).  Since each $Q^*$ is
covered by $N_d^{d}$ translates of $Q$, and since $\diam(Q) \leq r(Q) < \epsilon$, we have
\[\mathcal{H}^s_\epsilon(A) \leq N_d^d \sum_{Q} r(Q)^s \leq N_d^{2d} m^{-1} \mu(U).\]

Thus by taking a $\sup$ as $\epsilon \to 0$, we have 
\[H^s(A) \leq N_d^{2d} m^{-1} \mu(U)\] for any open set $U$ containing $A$.  Thus finally we have
\[\mathcal{H}^s(A) \leq  \ceil{1+12\sqrt{d}} m^{-1} \mu^*(A)\]
\end{proof}

\section{The revised Hausdorff dimension game}

As we mentioned before, the Hausdorff $\delta$-dimension game was introduced in \cite{DFSU_Arxiv}.
Here we define a variation of the game, the main difference is that we use not a single $\beta$,
but a sequence $\beta_i$ which goes to $0$ sufficiently slowly. Using a sequence of the $\beta_i$
does not  affect the fact that the game characterizes Hausdorff dimension (as Theorems~\ref{thm:p1}
and \ref{thm:p2} show), but seems important in our argument for the unfolding (Theorem~\ref{thm:unfthm}). 

\begin{definition}
Let $d \geq 1$ be an integer and fix $\rho_0>0$, $0 < \beta_{i+1}\leq\beta_i < \frac{1}{2}$
be so that $\lim_{i \to \infty} \beta_i = 0$ satisfying 
\begin{equation}\label{eqn:betaspeed}
\forall \eta>0\ \exists n_0\ \forall n \geq n_0\ \beta_n \geq \prod_{i<n} \beta_i^\eta.
\end{equation}
Define $\rho_n=\left(\prod_{i<n} \beta_i\right) \rho_0$. Let $A \subseteq \mathbb{R}^d$.
Let $0 <\delta \leq d$.  The $\delta$-Hausdorff dimension game with target set $A$ is following game:
\begin{center}
\begin{games}

\game[game label={$G_{\vec\beta}^\delta (A)$}, dots]{
{$F_0$}{$x_0$}{$F_1$}{$x_1$}{$F_2$}{$x_2$}{$F_3$}{$x_3$}}
\end{games}
\end{center}
where player $\I$ must follow the rules
\begin{itemize}
\item $F_i$ is a finite set of points in $\mathbb{Q}^d$.
\item $F_i$ is $3\rho_i$ separated. 
\item $F_{i+1} \subseteq B(x_i, (1-\beta_{i})\rho_i)$.
\item There exists some $c>0$ so that $\displaystyle \limsup_{n \to \infty}
\frac{\prod_{i < n} \lh{F_i}^{-1}}{{\prod_{i<n}\beta_i^\delta}} \leq c$.
\end{itemize}
and player $\II$ must simply play so that $x_i \in F_i$.  Provided the players meet these requirements,
player $\I$ wins if and only if $\lim_{n \to \infty} x_n \in A$.
Note that the last ``rule'' for player $\I$ is a limiting condition on the
average number of choices offered to player $\II$.
\end{definition}

\begin{remark}
In the original game of \cite{DFSU_Arxiv}, the points which player $\I$ plays need not be
rational.  It turns out that to prove the two theorems characterizing Hausdorff dimension
(Theorems~\ref{thm:p1} and \ref{thm:p2}) it suffices to use the rational version above, which of course
is determined from $\ad$. It is not clear that the the rational and real versions of the game are
equivalent, however. For sets $A$
for which the games are determined, the winning players must agree for
$\delta \neq \HD(A)$, but even though the games are determined, it seems possible that they
may disagree on who wins at $\delta=\HD(A)$. 
\end{remark}

\begin{theorem} \label{thm:p1}
If player $\I$ has a winning strategy in the $\delta$-Hausdorff dimension game,
then there is a compact $K\subseteq A$ with $\HD(K)\geq \delta$.
\end{theorem}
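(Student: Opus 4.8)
The plan is to read a winning strategy $\sigma$ for $\I$ as a recipe that simultaneously builds a compact Cantor-like subset of $A$ and a natural Frostman measure on it, and then to feed this into Theorem~\ref{thm:rtt}(i). First I would let $T$ be the tree of all positions $(x_0,\dots,x_{n-1})$ reachable when $\I$ follows $\sigma$ and $\II$ plays legally (so the sets $F_0,F_1,\dots$ are determined by $\sigma$ and the $x_i$). Since each $F_i$ is finite, $T$ is finitely branching, and since $\sigma$ is winning it never offers $\II$ an empty set, so $T$ has infinite branches. Along any branch $\lh{x_{i+1}-x_i}<\rho_i$ and $\sum_i\rho_i<\infty$ (because $\beta_i<\tfrac12$ forces $\rho_{i+1}<\tfrac12\rho_i$), so $(x_i)$ converges; let $f\colon[T]\to\R^d$ send a branch to its limit and set $K=f([T])$. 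As $[T]$ is compact and $f$ is continuous (branches sharing their first $n$ coordinates have images within $2\rho_{n-1}$ of each other), $K$ is compact, and since $\sigma$ wins every resulting run we get $\lim_i x_i\in A$, i.e. $K\subseteq A$. I equip $[T]$ with the branching measure $\nu$ that splits mass equally at each node, so the cylinder above $(x_0,\dots,x_{n-1})$ has $\nu$-mass $\prod_{i<n}\lh{F_i}^{-1}$, and put $\mu=f_*\nu$, a Borel probability measure with $\mu^*(K)=1$.

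The crux is a ball estimate for $\mu$, and it rests on a sharp confinement computation. Writing $(1-\beta_k)\rho_k=\rho_k-\rho_{k+1}$ and telescoping, every $w$ in the cylinder through $x_j$ satisfies $\lh{w-x_j}\le\sum_{k\ge j}(1-\beta_k)\rho_k=\rho_j$, so that cylinder lies in $\cl{B(x_j,\rho_j)}$. Combined with the $3\rho_j$-separation of $F_j$, any two cylinders that first split at level $j$ sit in radius-$\rho_j$ balls whose centers are at least $3\rho_j$ apart, hence lie at mutual distance at least $\rho_j$. Consequently, if $\rho_{n+1}\le r<\rho_n$ and $w\in K$ leaves $z$'s branch at some level $j\le n$, then $\lh{w-z}\ge\rho_j\ge\rho_n>r$; thus $B(z,r)\cap K$ is contained in the single level-$(n+1)$ cylinder through $z$, giving $\mu(B(z,r))\le\prod_{i\le n}\lh{F_i}^{-1}$.

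The limiting rule of the game now converts this into a Frostman bound. Along $z$'s branch the rule gives some $c=c(z)$ with $\prod_{i\le n}\lh{F_i}^{-1}\le(c+o(1))\prod_{i\le n}\beta_i^\delta=(c+o(1))\rho_0^{-\delta}\rho_{n+1}^\delta\le(c+o(1))\rho_0^{-\delta}r^\delta$, so $\limsup_{r\to0}\mu(B(z,r))/r^\delta\le c(z)\rho_0^{-\delta}<\infty$ at every $z\in K$. If this bound were uniform in $z$, Theorem~\ref{thm:rtt}(i) would immediately give $\mathcal H^\delta(K)\ge m^{-1}\mu^*(K)=m^{-1}>0$ and hence $\HD(K)\ge\delta$.

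The point that must be gotten exactly right is the confinement radius: the naive estimate only places a cylinder in a ball of radius close to $\tfrac32\rho_j$, too weak to force $B(z,r)$ to meet one cylinder, and it is the telescoping identity $\sum_{k\ge j}(1-\beta_k)\rho_k=\rho_j$ together with the generous $3\rho_j$-separation that leaves a full $\rho_j$ gap; notably this never uses the slow-decay hypothesis \eqref{eqn:betaspeed}, consistent with its being needed only for the unfolding. The one genuine gap is that the rule only furnishes a constant $c(z)$ that may depend on $\II$'s play, so Theorem~\ref{thm:rtt}(i) does not apply to $K$ directly. To stay inside $\zf+\ac_\omega$ I would not seek a uniform $c$ but decompose: the closed sets $B_N=\{b\in[T]\colon\forall n\ge N,\ \prod_{i<n}\lh{F_i(b)}^{-1}\le N\prod_{i<n}\beta_i^\delta\}$ increase to $[T]$, so $\nu(B_N)\to1$ and some $B_N$ has $\nu(B_N)>0$. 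Running the estimate above on $K'=f(B_N)$ with the normalized restriction of $f_*(\nu\res B_N)$ produces the uniform bound $\limsup_{r\to0}\mu(B(z,r))/r^\delta\le N\rho_0^{-\delta}$ for $z\in K'$, and Theorem~\ref{thm:rtt}(i) then yields $\mathcal H^\delta(K')>0$, so that the compact set $K'\subseteq A$ has $\HD(K')\ge\delta$, as required.
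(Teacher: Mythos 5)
Your proposal is correct, and it shares the paper's skeleton exactly: the finitely branching tree of $\sigma$-runs, the equal-splitting branching measure, the pushforward onto the compact image $K\subseteq A$, and an appeal to Theorem~\ref{thm:rtt}(i). Where you genuinely diverge is in how the run-dependent constant from $\I$'s limit rule gets neutralized, and the two devices are instructive to compare. The paper never confronts that constant: it fixes $\gamma<\delta$ and shows $\limsup_{r\to 0}\mu(B(x,r))/r^\gamma=0$ at \emph{every} $x\in K$, so Theorem~\ref{thm:rtt}(i) applies with every $m>0$ and yields $\mathcal{H}^\gamma(K)=\infty$ for each $\gamma<\delta$; since $0<m$ regardless of what $c(x)$ is, no decomposition is needed. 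The price is that the paper's ball-to-cylinder comparison is off by one level (its cylinder measure carries one fewer $\lh{F_i}^{-1}$ factor than the $\beta_i^\gamma$ factors in the denominator), leaving a stray $\beta_n^{-\gamma}$ that it absorbs with the slow-decay hypothesis~(\ref{eqn:betaspeed}), choosing $\eta$ so that $\gamma(1+\eta)<\delta$ (the paper's ``since $\gamma(1+\eta)>\delta$'' is a typo; the displayed inequality chain needs $<$). Your sharper confinement --- the telescoped radius $\rho_j$, so that $B(z,r)$ meets only one level-$(n+1)$ cylinder when $r<\rho_n$ --- makes the indices match, and you are right that this theorem then needs no appeal to~(\ref{eqn:betaspeed}); note, however, that the paper also invokes that hypothesis in the proof of Theorem~\ref{thm:p2}, so it is not used ``only for the unfolding.'' Working at the critical exponent $\delta$ you only get a finite, branch-dependent Frostman constant, and your $B_N$-exhaustion (closed, increasing to $[T]$, $\nu(B_N)\to 1$, restrict and renormalize) is exactly the right repair and is sound in $\zf+\ac_\omega$. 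Net comparison: the paper's route is shorter and avoids any exhaustion, but uses~(\ref{eqn:betaspeed}) and concludes $\mathcal{H}^\gamma(K)=\infty$ for all $\gamma<\delta$; yours is slightly longer but removes hypothesis~(\ref{eqn:betaspeed}) from this theorem and gives the stronger conclusion $\mathcal{H}^\delta(K')>0$ at the critical dimension itself. One constant-tracking slip, not a gap: after renormalizing by $\nu(B_N)$, the uniform bound is $N\rho_0^{-\delta}/\nu(B_N)$ rather than $N\rho_0^{-\delta}$, which changes nothing downstream.
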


\begin{proof}
Suppose $\sigma$ is a winning strategy for player $\I$ in the $\delta$-Hausdorff dimension game.
Define a finitely splitting tree $T$ by
\[T=\setof{(x_0, \dots, x_n) \suchthat \forall i\ x_i \in \sigma(x_0, \dots, x_{i-1})}\]
Define a map $\pi\colon [T] \to \mathbb{R}^d$ by $\pi(x_0, \dots, x_n, \dots) = \lim_{n \to \infty} x_n$,
which is clearly continuous.  Define a probability measure $\mu$ on $[T]$ by 
\[\mu(\cylinder{(x_0, \dots, x_n)}) = \prod_{i< n} \abs{\sigma(x_0, \dots, x_i)}^{-1}\]
and let $\mu$ also denote the push-forward measure of $\mu$ on $\mathbb{R}^d$.
Since $\sigma$ is winning, $\pi([T]) \subseteq A$, and since $T$ is finitely splitting, $[T]$ is compact,
and since $\pi$ is continuous, $K=\pi([T])$ is compact. We show that $\HD(K)\geq \delta$. It suffices
to fix $\gamma<\delta$ and show that $\HD(K)\geq \gamma$. 
Now for $x \in K$, we have $x=\pi(x_0, \dots, x_n, \dots)$.  We compute
\begin{equation*}
\begin{split}
\limsup_{r \to 0}\frac{\mu(B(x, r))}{r^\gamma} &\leq \limsup_{n \to \infty}\frac{\mu(B(x, \rho_n))}{{\rho_{n+1}}^\gamma}\\
&\leq \limsup_{n \to \infty}\frac{\mu(B(x_n, \rho_n))}{{\rho_{n+1}}^\gamma}\ \text{by the separation rule}\\
&= \limsup_{n \to \infty}\frac{\mu(\cylinder{(x_0, \dots, x_n)})}
{\left(\prod_{i \leq n} {\beta_i}^\gamma\right){\rho_0}^\gamma}\\
&= \limsup_{n \to \infty}\left(\frac{\prod_{i< n} \abs{\sigma(x_0, \dots, x_i)}^{-1}}{\prod_{i \leq n}
{\beta_i}^\gamma}\right) \frac{1}{{\rho_0}^\gamma}\\
&
=\limsup_{n \to \infty}\left(\frac{\prod_{i< n} \abs{\sigma(x_0, \dots, x_i)}^{-1}}{\prod_{i < n}
{\beta_i}^\gamma} \right) \frac{1}{\beta_n^\gamma  {\rho_0}^\gamma}
\\ &
\leq \limsup_{n \to \infty}\left(\frac{\prod_{i< n} \abs{\sigma(x_0, \dots, x_i)}^{-1}}{\prod_{i < n}
{\beta_i}^\gamma} \right) \frac{1}{{\prod_{i<n}\beta_i}^{\eta\gamma}  {\rho_0}^\gamma}
\\ & 
\leq \limsup_{n \to \infty}\left(\frac{\prod_{i< n} \abs{\sigma(x_0, \dots, x_i)}^{-1}}{\prod_{i < n}
{\beta_i}^{\gamma(1+\eta)}} \right) \frac{1}{  {\rho_0}^\gamma}
\\ &
=0 \text{ by the limit rule on the number of moves, since $\gamma(1+\eta)>\delta$}
\end{split}
\end{equation*}
And so by Theorem~\ref{thm:rtt}, $H^\gamma(K)=\infty$ 
and thus $\HD(K) \geq \delta$.
\end{proof}

\begin{theorem} \label{thm:p2}
If player $\II$ has a winning strategy in the $\delta$-Hausdorff dimension game, then $\HD(A) \leq \delta$
\end{theorem}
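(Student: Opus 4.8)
The plan is to use the winning strategy $\tau$ for player $\II$ to build a cover of $A$ witnessing small Hausdorff measure. The key idea is that since $\II$ wins, every point of $A$ arises as a limit $\lim_n x_n$ where $x_n \in F_n$ is chosen by $\tau$ in response to player $\I$'s moves; conversely, any point of $A$ that $\I$ can reach must be ``captured'' along some play consistent with $\tau$. I would first fix $\gamma > \delta$ and aim to show $\HD(A) \leq \gamma$, which suffices since $\gamma > \delta$ is arbitrary. The strategy is to construct, for each $n$, a collection of balls of radius roughly $\rho_n$ whose union covers $A$ and to estimate the $\gamma$-sum $\sum (\diam)^\gamma$ of this cover using the constraint on the number of moves.

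The main construction is to consider the tree of positions consistent with $\tau$. At each stage, player $\I$ is constrained to play a $3\rho_i$-separated finite set $F_i \subseteq B(x_{i-1}, (1-\beta_{i-1})\rho_{i-1})$, and $\II$ responds (via $\tau$) with some $x_i \in F_i$. The crucial point is that a point $a \in A$ can be forced into the set only if there is a play following $\tau$ converging to $a$, so the balls $B(x_n, \rho_n)$ arising from $\tau$-consistent plays must cover $A$. Here the difficulty is that player $\I$ gets to choose the sets $F_i$ adaptively, so I would need to argue that, \emph{across all legal moves of} $\I$, the union of the resulting balls covers $A$, and then bound the total number of such balls at level $n$. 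The separation rule ensures the balls at a given level are genuinely spread out, which controls overlaps; but counting them requires exploiting the limit rule $\limsup_n \prod_{i<n}\lh{F_i}^{-1} / \prod_{i<n}\beta_i^\delta \leq c$, read as a \emph{lower} bound forced on $\I$, to get an \emph{upper} bound on how many balls $\II$'s strategy can avoid escaping from.

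The heart of the estimate is a computation dual to the one in Theorem~\ref{thm:p1}. There, one bounds a pushforward measure from above using the move rule; here I would instead bound the number of balls of radius $\rho_n$ needed to cover $A$ from above by $\prod_{i<n} \lh{F_i}$ for an optimally (adversarially) chosen sequence of $\I$-moves, so that
\[
\mathcal{H}^\gamma_{\rho_n}(A) \;\leq\; \Big(\textstyle\prod_{i<n} \lh{F_i}\Big)\, \rho_n^\gamma
\;=\; \Big(\textstyle\prod_{i<n}\lh{F_i}\Big)\Big(\textstyle\prod_{i<n}\beta_i\Big)^\gamma \rho_0^\gamma.
\]
By the limit rule $\prod_{i<n}\lh{F_i}^{-1} \gtrsim \prod_{i<n}\beta_i^\delta$ (up to the constant $c$), one has $\prod_{i<n}\lh{F_i} \lesssim c\,\prod_{i<n}\beta_i^{-\delta}$, whence the right-hand side is at most a constant times $\prod_{i<n}\beta_i^{\gamma-\delta}\rho_0^\gamma$, which tends to $0$ as $n \to \infty$ because $\gamma > \delta$ and $\prod_{i<n}\beta_i \to 0$. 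Taking $n \to \infty$ gives $\mathcal{H}^\gamma(A) = 0$, so $\HD(A) \leq \gamma$.

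The step I expect to be the main obstacle is the first: verifying that player $\II$'s winning strategy actually produces a \emph{cover} of $A$ by the balls $\{B(x_n, \rho_n)\}$ over $\tau$-consistent plays, and that the number of these balls at level $n$ is bounded by $\prod_{i<n}\lh{F_i}$ for a single controlling sequence of $\I$-moves rather than blowing up as one ranges over all of $\I$'s choices. Making this precise likely requires a careful argument that, were some $a \in A$ to escape every such ball at some level, player $\I$ could use $a$ to defeat $\tau$ by steering successive $F_i$ toward $a$ while respecting the separation and nesting rules — contradicting that $\tau$ is winning. Formalizing this ``escape implies $\I$ wins'' dichotomy, together with the bookkeeping needed to keep the cover finite at each level, is where the real work lies; the measure-theoretic estimate above is then routine given the move-counting bound.
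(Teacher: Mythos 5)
Your proposal has a genuine gap, and it starts with a misreading of the limit rule. The rule $\limsup_n \prod_{i<n}\lh{F_i}^{-1}\big/\prod_{i<n}\beta_i^\delta \le c$ says that eventually $\prod_{i<n}\lh{F_i} \ge c^{-1}\prod_{i<n}\beta_i^{-\delta}$: it is a \emph{lower} bound on the number of choices player $\I$ must offer, not an upper bound (this is exactly why, in Theorem~\ref{thm:p1}, the rule forces the pushforward measure to be small on cylinders, yielding dimension $\geq\delta$ when $\I$ wins). So your key inequality $\prod_{i<n}\lh{F_i} \lesssim c\prod_{i<n}\beta_i^{-\delta}$ is backwards for legal plays of $\I$; the separation rule alone permits $\prod_{i<n}\lh{F_i}$ to be as large as roughly $\prod_{i<n}(C/\beta_i)^d$, and then your covering sum $\bigl(\prod_{i<n}\lh{F_i}\bigr)\rho_n^\gamma$ does not tend to $0$ for any $\gamma\le d$. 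An upper bound of the kind you want does exist, but only along the special plays that track a given point $x\in A$, and it comes from the rule being \emph{violated} there: when $\II$ wins and $\lim_n x_n\in A$, the only way $\tau$ can be winning is that $\I$ broke the counting rule, so for every $C$ one has $\prod_{m\le n} j_m < C^{-1}\prod_{m\le n}\beta_m^{-\delta}$ for infinitely many $n$. Note this is a limsup-type statement, holding only infinitely often and non-uniformly in $x$, so it cannot feed a level-by-level cover of all of $A$ at a single scale $\rho_n$.

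The second, structural, problem is the one you flag yourself but do not resolve: there is no ``single controlling sequence of $\I$-moves.'' A single play against $\tau$ produces one ball per level, not a cover; to reach every $x\in A$ you must range over a whole tree of $\I$-plays, and the number of balls at level $n$ of that tree is the number of branches, which is not bounded by $\prod_{i<n}\lh{F_i}$ for any one play. The paper's proof avoids covering altogether. It fixes maximal $\frac12\rho_n$-separated rational grids $E_n$, split into boundedly many $3\rho_n$-separated pieces $E_n^i$; it restricts $\I$ to subsets of these grids obtained by deleting ``$\tau$'s favorite points first,'' so that every $x\in A$ is the limit of $\tau$'s own choices along some branch of a fixed countable tree of plays; it places a probability measure $\mu$ on this tree, weighting a move offering $j$ points by $j^{-(1+\epsilon)}$ (a summable weighting is needed because the branching is unbounded); and it pushes $\mu$ forward to $\R^d$. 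The rule violation along the branch tracking $x$ then shows $\limsup_{r\to 0}\mu(B(x,r))/r^\gamma = \infty$ at \emph{every} $x\in A$, and Theorem~\ref{thm:rtt}(ii) converts this pointwise density statement into $\mathcal{H}^\gamma(A)=0$ for each suitable $\gamma>\delta$. That density theorem does precisely the covering bookkeeping your outline cannot: it localizes the estimate to each point of $A$ and absorbs the uncountable, non-uniform family of tracking plays. Without the weighted measure and the density theorem (or an equivalent Vitali-type argument), and with the rule read in the correct direction, your proposed estimate cannot be completed.
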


\begin{proof}
Note first that for a ball $B \subseteq \mathbb{R}^d$ of radius $\rho$, any $3\beta\rho$
separated subset $E \subseteq B$ has size at most
\[\abs{E} \leq \ceil{\frac{4\sqrt{d}}{3\beta}}^d.\]
This can be seen by comparing the volumes of a cube of side length $4\rho$ (which contains $B$) and
the sums of the volumes of cubes centered on points in $E$ of side lengths $3\beta\rho/\sqrt{d}$, which
must be disjoint by hypothesis on $E$.

The actual bound is unimportant, we need that it depends only on $d$ and $\beta$.

Suppose now that $\tau$ is a winning strategy for player $\II$ in  the $\delta$-Hausdorff dimension game.
Let $\rho_n$, $\beta_n$ etc. be the parameters of the game.

Let $E_n$ be a maximal $\frac{1}{2}\rho_n$-separated subset of $\mathbb{Q}^d$. and let $\setof{E_n^{i}}_{0 \leq i < \ell}$
partition $E_n$ into $3\rho_n$-separated subsets.  Note that this can be done with a fixed $\ell$
which doesn't depend on $n$.  In fact, if each $E_n^i$ is a maximal $3\rho_n$-separated subset
of $E_n \setminus \bigcup_{j < i} E_n^j$, then we can bound $\ell$  by ${\ceil{14\sqrt{d}}}^d$, for example.

We will consider playing various legal subsets of $E^i_n$ at round $n$ of the game
against $\tau$ (subject to some restrictions).  We will then define a probability measure on the
tree of positions obtainable by playing this way, and then push this measure forward to $\mathbb{R}^d$
and apply Theorem~\ref{thm:rtt}.  More precisely, we will define for appropriate
$u \in (\ell \times \omega)^{<\omega}$ a position $p_u$ in the game, a ball $B_u$, and an
associated $\mu$-measure value for $N_u=\setof{z \in (\ell \times k)^\omega \suchthat u \subseteq z}$.
We will then push $\mu$ forward via the function which computes the resulting point in the game.
We will proceed by induction on the length of $u$.

For $u=\emptyset$, we will assign $p_\emptyset$ as the empty position, $B_\emptyset = B(0, r)$,
and we let $\mu(N_\emptyset) = 1$.  For $u=(s \concat i, t \concat j)$ where $i< \ell$,
let $k_u=\abs{E_{\lh{s}}^i \cap B_{(s, t)}}$.  If $j > k_u$, or if $j=0$, then $u$ is \emph{inappropriate},
otherwise, we let $p_u$ be the position of length $\lh{p_{(s, t)}}+2$
in which player $\I$ has played $E_{\lh{s}}^i(t, j)$ where
\[E_{\lh{s}}^i(t, j) = \begin{cases}E_{\lh{s}}^i \cap B_{(s, t)} & j=k_u\\ 
E_{\lh{s}}^i \cap B_{(s, t)} \setminus \bigcup_{j<j'\leq k_u}\setof{\tau(E_{\lh{s}}^i(t, j'))} & j < k_u\end{cases}\]
and player $\II$ has followed $\tau$.  If $x_u$ is the point chosen by $\tau$, then we let 
\[B_u=B(x_u, (1-\beta_{\lh{u}})\rho_{\lh{u}})\]

The idea is that we first choose which $E_n^i$ to play from, depending on $i$, and then,
if $j$ is nonzero and less than or equal to $k_u$ (which is the maximum number of points we could legally play from $E_n^i$),
we play exactly $j$ many points of $E_n^i$.  Which points of $E_n^i$ we play is decided by $\tau$.
We remove $\tau$'s favorite points first.

We assign $\mu$-value to $N_{u}=N_{(s \concat i, t \concat j)}$ by the rule
\[\mu(N_{(s \concat i, t \concat j)}) = \frac{1}{\ell_u j^{(1+\epsilon)} \sum_{j' = 1}^{k_u}
\frac{1}{{(j')}^{(1+\epsilon)}}} \mu(N_{(s, t)})\]
where $\ell_u=\abs{\setof{i' < \ell \suchthat \exists j' ~ k_{(s \concat i', t \concat j')} >0}}$
is the number of possible choices for $i$ which yield appropriate choices for $u$.
Letting $\epsilon>0$ be arbitrary.

Note that for appropriate $u = (s \concat i, t \concat j)$, we have
\[\mu(N_{(s \concat i, t \concat j)}) = \frac{1}{\ell_u j^{(1+\epsilon)} \sum_{j' = 1}^{k_u}
\frac{1}{{(j')}^{(1+\epsilon)}}} \mu(N_{(s, t)}) \geq c j^{-(1+\epsilon)}\mu(N_{(s, t)})\]
where $c = \frac{1}{\ell \sum_{j'=1}^\infty (j')^{-(1+\epsilon)}}$.
And so we have, for appropriate $(s, t) \in (\ell \times \omega)^{<\omega}$
\[\mu(N_{(s, t)}) \geq c^{\lh{t}} \prod_{m=0}^{\lh{t}-1} t(m)^{-(1+\epsilon)}.\]

The reason we need to use $\epsilon$ here is because $k_u$ may go to infinity with $\abs{u}$,
and so we have no uniform constant $c$ without using a summable series.

Now let $x \in A \cap B(0, r)$.  Since the sets $E_n$ are maximal $\frac{1}{2}\rho_n$ separated,
there is a sequence of points $x_n \in E_n$ so that for every $n$, $\abs{x_n-x}<\frac{1}{2}\rho_n$.
Thus since $\rho_{n+1}=\beta_{n}\rho_n$ and $\beta_n<\frac{1}{2}$, we have 
\[\abs{x_{n+1} - x_n} \leq \abs{x_{n+1} - x} + \abs{x_{n}-x} < \frac{1}{2}\rho_{n+1} +
\frac{1}{2}\rho_n = \rho_n\left(\frac{1}{2} + \frac{1}{2}\beta_n\right)< \rho_n \left(1 - \beta_n\right)\]
So that each $x_{n+1}$ is a legal possibility following $x_n$.
Because of this, we can obtain sequences $i_n$ and $j_n$ so that for every $n$,
$x_n \in E_n^{i_n}(j_0, \dots j_n)$ and $x_n = \tau(E_n^{i_n}(j_0, \dots j_n))$.
Since $\tau$ is a winning strategy, and $x$ is in player $\I$'s target set, and each move
we made for player $\I$ was legal, it must be the case that player $\I$'s condition on the
number of choices offered is violated, i.e. for every constant $C$
\[\limsup_{n \to \infty} \frac{{\left(\prod_{m \leq n} j_m\right)}^{-1}}
{\left(\prod_{m\leq n} \beta_m\right)^\delta} > C.\]
Now we can compute, for any $\gamma> \delta(1+\epsilon)(1+\eta) >\delta(1+\epsilon)$

\begin{equation*}
\begin{split}
\limsup_{r \to 0}\frac{\mu(B(x, r))}{r^\gamma} &\geq \limsup_{n \to \infty}\frac{\mu(B(x, 2\rho_n))}{(2\rho_n)^\gamma}~\text{(because it is a $\limsup$)}\\
&\geq \limsup_{n \to \infty}\frac{\mu(B(x_n, \rho_n)}{(2\rho_n)^\gamma}~\text{(monotonicity)}\\
&\geq \limsup_{n \to \infty}\frac{\mu(N_{(i_0 \dots i_n, j_0 \dots j_n)})}{(2\rho_n)^\gamma}~\text{(push-forward and monotonicity)}\\
&\geq \limsup_{n \to \infty}\frac{c^{n+1}\prod_{m\leq n} j_m^{-(1+\epsilon)}}{(2\rho_0\prod_{m<n}\beta_m)^\gamma}~\text{(by the definition of $\mu$)}\\
&\geq \limsup_{n \to \infty}\frac{c^{n+1}}{(2\rho)^\gamma}{\left(\frac{\prod_{m\leq n} j_m^{-1}}{(\prod_{m\leq n} \beta_m)^\delta}\right)}^{(1+\epsilon)} \left(\frac{(\prod_{m\leq n} \beta_m)^{\delta(1+\epsilon)}}{(\prod_{m<n} \beta_m)^{\gamma}}\right)
\\ &
\end{split}
\end{equation*}

\begin{equation*}
\begin{split}
\phantom{\limsup_{r \to 0}\frac{\mu(B(x, r))}{r^\gamma}} &
\geq \frac{C^{(1+\epsilon)}}{(2\rho_0)^\gamma}\limsup_{n \to \infty}c^{n+1} \left(\frac{(\prod_{m\leq n} \beta_m)^{\delta(1+\epsilon)}}{(\prod_{m<n} \beta_m)^{\gamma}}\right)~\text{(player $\I$ lost)}\\
&= \frac{C^{(1+\epsilon)}}{(2\rho_0)^\gamma}\limsup_{n \to \infty}c^{n+1} \beta_n^{\delta(1+\epsilon)} \left(\prod_{m < n}\frac{\beta_m^{\delta(1+\epsilon)}}{\beta_m^{\gamma}}\right)\\
&= \frac{C^{(1+\epsilon)}}{(2\rho_0)^\gamma}\limsup_{n \to \infty}c^{n+1} \beta_n^{\delta(1+\epsilon)} \left(\prod_{m < n}\beta_{m}^{\delta(1+\epsilon)-\gamma}\right)\\
&\geq \frac{C^{(1+\epsilon)}}{(2\rho_0)^\gamma}\limsup_{n \to \infty}c^{n+1} (\prod_{m < n} \beta_m^{\eta})^{\delta(1+\epsilon)} \left(\prod_{m < n}\beta_{m}^{\delta(1+\epsilon)-\gamma}\right)~\text{(equation~(\ref{eqn:betaspeed}))}\\
&= \frac{C^{(1+\epsilon)}}{(2\rho_0)^\gamma}\limsup_{n \to \infty}c^{n+1} \prod_{m < n} \beta_m^{\delta(1+\epsilon)(1+\eta) - \gamma}
\end{split}
\end{equation*}
Now since $c$ is a fixed constant and $\beta_i \to 0$, and since $\delta(1+\epsilon)(1+\eta) < \gamma$,
we have that for large enough $m$, $\frac{c}{{\beta_m}^{\gamma - \delta(1+\epsilon)(1+\eta)}}> 2$ and so
\[\limsup_{n \to \infty}c^{n+1} \prod_{m < n} \beta_m^{\delta(1+\epsilon)(1+\eta) - \gamma} = \infty\]

Thus by Theorem~\ref{thm:rtt}, we have shown that for any $\gamma>\delta$, $\HD(A) \leq \gamma$, and so $\HD(A) \leq \delta$.
\end{proof}

\begin{corollary}[$\mathsf{AD}$]
Let $A \subseteq \mathbb{R}^d$ and $0 \leq \delta \leq d$, then either $A$ contains a
compact set $K$ so that $\HD(K)\geq\delta$ or $\HD(A) \leq \delta$.
\end{corollary}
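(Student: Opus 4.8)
The plan is to read off the dichotomy directly from determinacy together with Theorems~\ref{thm:p1} and~\ref{thm:p2}. First I would dispose of the degenerate case $\delta=0$: if $A=\emptyset$ then $\HD(A)=0\leq\delta$, while if $A\neq\emptyset$ then any singleton $\{x\}\subseteq A$ is compact with $\HD(\{x\})=0\geq 0=\delta$, so the conclusion holds trivially. Hence I may assume $0<\delta\leq d$, which is exactly the range in which the game $G_{\vec\beta}^\delta (A)$ is defined.

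Next I would fix once and for all a sequence $\vec\beta=(\beta_i)$ and a radius $\rho_0>0$ meeting the requirements of the definition, including condition~(\ref{eqn:betaspeed}); such parameters exist, for instance $\beta_i=\frac{1}{i+3}$, since then $\log\beta_n=o\!\left(\sum_{i<n}\log\beta_i\right)$, which is readily seen to imply~(\ref{eqn:betaspeed}). With these parameters fixed, I regard $G_{\vec\beta}^\delta (A)$ as a genuine integer game: player~$\I$'s moves range over the countable collection of finite subsets of $\mathbb{Q}^d$ and player~$\II$'s moves range over $\mathbb{Q}^d$, so after fixing bijections of each of these move sets with $\omega$ a run of the game becomes an element of $\ww$. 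The rules---the separation and nesting conditions on the $F_i$, the requirement that $x_i\in F_i$, and player~$\I$'s limiting condition on the average number of choices---together with the winning condition $\lim_n x_n\in A$ are folded into the payoff set in the usual way: the first player to make a move that is illegal at a finite stage loses, and on a run in which all finite moves remain legal the outcome is decided by whether player~$\I$'s limiting rule holds and whether $\lim_n x_n\in A$. This payoff set is some subset of $\ww$, and under $\ad$ every such game is determined, so one of the two players has a winning strategy.

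Finally I would split on the winner. If player~$\I$ has a winning strategy then Theorem~\ref{thm:p1} produces a compact $K\subseteq A$ with $\HD(K)\geq\delta$, the first alternative; if instead player~$\II$ has a winning strategy then Theorem~\ref{thm:p2} gives $\HD(A)\leq\delta$, the second. Since determinacy furnishes a winning strategy for one of the players, the stated disjunction follows. The one point that really needs care---and the place I expect to spend a sentence---is this reduction to an integer game: one must check that the coding of moves by natural numbers is faithful, so that a winning strategy in the coded game transfers back to $G_{\vec\beta}^\delta(A)$ and conversely, and that folding the legality rules and player~$\I$'s tail condition into the payoff set yields a well-defined subset of $\ww$. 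Because $\ad$ grants determinacy for arbitrary payoff sets, no complexity computation is required; all the mathematical content sits in Theorems~\ref{thm:p1} and~\ref{thm:p2}, with determinacy used only to choose between them.
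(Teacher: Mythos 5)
Your proposal is correct and follows exactly the route the paper intends: the corollary is immediate from Theorems~\ref{thm:p1} and~\ref{thm:p2} once one notes (as the paper does in its remark) that the rational version of the game has countable move sets and is therefore determined under $\ad$. Your extra care with the degenerate case $\delta=0$ and with exhibiting an explicit sequence $\beta_i=\frac{1}{i+3}$ satisfying condition~(\ref{eqn:betaspeed}) is sound but does not change the substance of the argument.
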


To illustrate some of the issues concerning the determinacy of this game we
consider the following examples.

\begin{example} \label{ex1}
Let $0 < \delta \leq 1$  $K_n \subseteq (n, n+1)$ be a compact set with
$\HD(K_n) = \delta\left(1-\frac{1}{n+1}\right)$ for each $n \in \omega$.
Then player $\II$ wins $G^\delta_{\vec\beta}(\bigcup_n K_n)$, since this is a determined game,
and player $\I$ cannot win, since $\bigcup_n K_n$ doesn't contain any compact subsets of Hausdorff dimension $\delta$.
\end{example}

\begin{example}
Let $B \subseteq \mathbb{R}$ be a Bernstein set, then since $\lambda^*(B)>0$, we must have $\HD(B) = 1$.
Clearly player $\I$ cannot win $G^\delta_{\vec \beta}(B)$ for any $\delta>0$, since $B$ cannot contain
any uncountable closed set, so in particular $B$ cannot contain any compact set with positive Hausdorff dimension.
On the other hand, player $\II$ cannot have a winning strategy.

To see this, suppose player $\II$ had a winning strategy $\tau$ in $G^\delta_{\vec\beta}(B)$,
then one can construct a perfect subset of $\mathbb{R} \setminus B$ by building inductively
a perfect set of runs following $\tau$ where at 
at step $n$ we consider $x=\tau(E_0, \dots, E_{n-1},E)$ for some maximal legal move $E$,
and also $x'= \tau(E_0, \dots, E_{n-1},E \setminus \setof{ x})$. 
Since these moves are maximal, playing them doesn't violate player $\I$'s requirement
($\I$ is playing approximately $\frac{1}{\beta_{n-1}}$ many sets at round $n$, so is
satisfying the rule for any $\delta\leq 1$, that is, for all $\delta$).
This gives a perfectly splitting tree of positions, in which each level corresponds to disjoint closed intervals.
And since player $\I$'s condition is met, all branches through this tree must result
in points in $\mathbb{R} \setminus B$, which is impossible.
\end{example}

\section{The Unfolded Game}

In this section, we introduce an unfolded version of the Hausdorff
dimension game, and show that it is equivalent to the original.  This
result gives that analytic sets have the property that they can be
approximated from the inside by compact sets of the appropriate
Hausdorff dimension.  This is interesting, as other proofs of this
property are generally quite involved and require the analysis of the
approximations to the Hausdorff outer measure, and a so-called
``increasing sets lemma'' (again, see Theorems~47, 48 of \cite{RogersBook}),
and these are completely absent from our proof.

First a simple combinatorial lemma.

\begin{lemma}\label{lem:linord}
Suppose $A$ is a finite set with linear orders $\preceq_1, \preceq_2, \dots, \preceq_n$.
There is an element $a \in A$ so that for every $i \leq n$
\[\abs{\setof{b \in A \suchthat b \preceq_i a}}\geq \frac{1}{n}\abs{A}\]
\end{lemma}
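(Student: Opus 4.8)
The plan is to argue by a simple counting/union-bound estimate. Write $N=\abs{A}$. For each order $\preceq_i$ and each $a\in A$, introduce the \emph{rank} $r_i(a)=\abs{\setof{b\in A\suchthat b\preceq_i a}}$. Since $\preceq_i$ is a linear order on the $N$-element set $A$, the assignment $a\mapsto r_i(a)$ is a bijection from $A$ onto $\setof{1,2,\dots,N}$. The conclusion we want is precisely an element $a$ with $r_i(a)\geq N/n$ for all $i\leq n$ simultaneously, so I will instead bound the set of elements that fail this.

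Call $a$ \emph{bad for $i$} if $r_i(a)<N/n$, and let $B_i$ be the set of such elements. Because $a\mapsto r_i(a)$ is a bijection onto $\setof{1,\dots,N}$, the size of $B_i$ is exactly the number of integers in $\setof{1,\dots,N}$ lying strictly below $N/n$; checking the cases $N/n\in\N$ and $N/n\notin\N$ separately shows this count equals $\ceil{N/n}-1$ in both. Any element failing the conclusion lies in some $B_i$, so it suffices to show $\abs{\bigcup_{i\leq n}B_i}<N$. By subadditivity, $\abs{\bigcup_{i\leq n}B_i}\leq\sum_{i=1}^n\abs{B_i}=n\bigl(\ceil{N/n}-1\bigr)$, and using the standard inequality $\ceil{N/n}\leq(N+n-1)/n$ this is at most $N-1<N$. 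Hence some $a\in A$ lies in no $B_i$, i.e.\ $r_i(a)\geq N/n$ for every $i\leq n$, which is the required element.

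I do not expect a serious obstacle here; the only points needing care are the exact count $\abs{B_i}=\ceil{N/n}-1$ (where one should separate the integer and non-integer cases for $N/n$) and the final arithmetic $n(\ceil{N/n}-1)\leq N-1$. Once those two elementary estimates are in hand, the union bound immediately produces an element good for all $n$ orders, and no further input is needed.
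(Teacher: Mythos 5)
Your proof is correct and takes essentially the same approach as the paper: both arguments bound the size of each ``bad'' set $B_i$ (the paper notes it is a $\preceq_i$-initial segment of size $<\frac{1}{n}\abs{A}$, you count it exactly as $\ceil{N/n}-1$ via the rank bijection) and then apply a union bound to conclude some element avoids all of them.
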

\begin{proof}
Let 
\[A_i = \setof{a \in A \suchthat \abs{\setof{b \in A \suchthat b \preceq_i a}} < \frac{1}{n} \abs{A}}\]

Suppose the lemma fails, so that $\bigcup_i A_i = A$.
We will proceed by counting: First note that $A_i$ is an initial segment of $A$ by $\preceq_i$,
since if $a \in A_i$ and $b \preceq_i a$, then certainly
$\setof{c \in A \suchthat c \preceq_i b} \subseteq \setof{c \in A \suchthat c \preceq_i a}$.
So for each $i$, there is some $a_i \in A_i$ so that $A_i = \setof{a \in A \suchthat a \preceq_i a_i}$
But then since $a_i \in A_i$, we have that

\[\abs{A_i} = \abs{\setof{a \in A \suchthat a \preceq_i a_i}} < \frac{1}{n} \abs{A}\]
and so since $A = \bigcup_i A_i$ we have
\[\abs{A} \leq \sum_{i\leq n} A_i < \sum_{i\leq n} \frac{1}{n} \abs{A} = \abs{A}\]
a contradiction.
\end{proof}

\begin{definition} \label{def:hdg}
Let $d \geq 1$ be an integer and fix $\rho_0>0$, $0 < \beta_{i+1}\leq\beta_i < \frac{1}{2}$
be so that $\lim_{i \to \infty} \beta_i = 0$ satisfying 
\begin{equation}\label{eqn:betaspeed}
\forall \eta>0\ \exists n_0\ \forall n \geq n_0\ \beta_n \geq \prod_{i<n} \beta_i^\eta.
\end{equation}
Define $\rho_n=\left(\prod_{i<n} \beta_i\right) \rho_0$.
Let $F \subseteq \mathbb{R}^d \times \omega^\omega$ and let
$A=p[F]=\setof{x \in \mathbb{R}^d \suchthat \exists y \in \omega^\omega (x, y) \in F}$.
For $0 <\delta \leq d$, the \emph{unfolded $\delta$-Hausdorff dimension game} with target set $A$ is following game:

Player $\I$ makes moves $F_i$ in each round $i$, and whenever $\I$ chooses,
they may play also a digit $y_j \in \omega$ extending the finite sequence
$y_0, \dots y_{j-1}$ of any digits played so far.
As before, player $\II$ simply makes moves $x_i \in F_i$.

In order to not lose trivially, 
player $\I$ must ensure that the following hold.
\begin{itemize}
\item $F_i$ is a finite set of points in $\mathbb{Q}^d$.
\item $F_i$ is $3\rho_i$ separated. 
\item $F_{i+1} \subseteq B(x_i, (1-\beta_{i})\rho_i)$.
\item There exists some $c>0$ so that $\displaystyle \limsup_{n \to \infty}
\frac{\prod_{i < n} \lh{F_i}^{-1}}{{\prod_{i<n}\beta_i^\delta}} \leq c$.
\item For every $j \in \omega$, $y_j$ was eventually played.
\end{itemize}
Provided player $\I$ meets these requirements, player $\I$ wins if and only if 
\[\left(x, y\right) \in F\]
where $x = \lim_{n \to \infty} x_n$.
\end{definition}

Clearly if player $\I$ has a winning strategy in the unfolded
$\delta$-Hausdorff dimension game, then $\I$ has a winning strategy in
the original $\delta$-Hausdorff dimension game, since the unfolded
version has an extra requirement of producing a witness, and so is a
strictly harder game for $\I$.  Our goal then, is to show that we can
take a player $\II$ strategy in the unfolded game, and use it to
construct a strategy in the original.  Unfortunately, this requires us
to give up a little ground in the dimension.

\begin{theorem}\label{thm:unfthm}
If player $\II$ has a winning strategy in the unfolded $\delta$-Hausdorff dimension game,
and $s > \delta$, then player $\II$ has a winning strategy in the $s$-Hausdorff dimension game. 
\end{theorem}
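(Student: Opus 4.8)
The plan is to turn the winning strategy $\tau$ for $\II$ in the unfolded $\delta$-game into a winning strategy $\tau'$ for $\II$ in the (non-unfolded) $s$-game, both played with the same parameters $\rho_0,\vec\beta$. Since $A=p[F]$, to force the point $x=\lim_n x_n$ produced in the $s$-game to lie outside $A$, player $\II$ must guarantee $(x,y)\notin F$ for \emph{every} witness $y\in\ww$. I would have $\II$ run, in parallel, a family of simulated plays of the unfolded game following $\tau$, one for each ``partial witness'' $w\in\omega^{<\omega}$ activated so far: in the run indexed by $w$, player $\I$ is imagined to have committed to the witness digits recorded in $w$ (played on $\I$'s behalf by $\II$) and to have made point-moves that are suitable sub-moves of the real moves $F_i$. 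The single shared sequence of responses $x_0,x_1,\dots$ will be arranged to follow $\tau$ in \emph{every} active run, so that in the limit each infinite $y$ is tracked by a nested chain of activated partial witnesses, and the corresponding run witnesses $(x,y)\notin F$ because $\tau$ is winning.

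The central difficulty is that $\II$ must commit to one point $x_i\in F_i$ that is simultaneously a legal $\tau$-response in each of the finitely many currently active runs. For this I would reuse the ``remove $\tau$'s favorite points first'' device from the proof of Theorem~\ref{thm:p2}: in the run indexed by $w$, repeatedly applying $\tau$ and deleting its chosen point linearly orders $F_i$ by a relation $\preceq_w$ (declaring $\tau$'s successive choices $\preceq_w$-decreasing), and a point $p$ is exactly $\tau$'s response to the sub-move $\{b\in F_i : b\preceq_w p\}$, i.e.\ $p$ together with all points not yet removed when $p$ is chosen. Applying Lemma~\ref{lem:linord} to the $n_i$ orders $\{\preceq_w\}_{w\text{ active}}$ yields a single $x_i\in F_i$ with $|\{b : b\preceq_w x_i\}|\ge n_i^{-1}|F_i|$ for every active $w$. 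Declaring this $x_i$ to be $\II$'s real move and taking $G_i^w=\{b : b\preceq_w x_i\}$ in each run gives $x_i=\tau(G_i^w)$ simultaneously in all active runs. Because both games use the same $\rho_0,\vec\beta$, each $G_i^w\subseteq F_i$ automatically inherits the $3\rho_i$-separation and the containment $G_{i+1}^w\subseteq B(x_i,(1-\beta_i)\rho_i)$, so the $G_i^w$ are legal $\I$-moves apart from the limiting condition on the number of choices.

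It then remains to schedule the activation of partial witnesses and to check that limiting condition. New partial witnesses should be introduced only as one-digit extensions $w\concat\langle a\rangle$ of an already active $w$, with $a$ played at the current round; since such a child shares the entire past of $w$, the previously chosen $x_j$ stay $\tau$-consistent for it, and a standard dovetailing makes every element of $\omega^{<\omega}$ eventually active with every digit eventually played, so that every $y\in\ww$ is tracked. For the $\delta$-limiting rule along a fixed run, the bound $|G_i^w|\ge n_i^{-1}|F_i|$ together with the $s$-rule, which forces $\prod_{i<n}|F_i|\ge \tfrac1c\prod_{i<n}\beta_i^{-s}$ for large $n$, gives
\[
\prod_{i<n}|G_i^w|\ \ge\ \frac1c\,\frac{\prod_{i<n}\beta_i^{-s}}{\prod_{i<n}n_i}
\ =\ \frac1c\,\frac{\prod_{i<n}\beta_i^{-\delta}\cdot\prod_{i<n}\beta_i^{-(s-\delta)}}{\prod_{i<n}n_i},
\]
so the $\delta$-rule $\prod_{i<n}|G_i^w|\ge \tfrac1{c'}\prod_{i<n}\beta_i^{-\delta}$ holds as long as $\prod_{i<n}n_i$ stays dominated by $\prod_{i<n}\beta_i^{-(s-\delta)}$. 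Since $s>\delta$ and $\prod_{i<n}\beta_i\to 0$, the factor $\prod_{i<n}\beta_i^{-(s-\delta)}$ tends to infinity; this is exactly where the hypothesis $s>\delta$ is consumed, and I would slow the activation schedule enough that $n_i$ grows only gradually, keeping $\prod_{i<n}n_i$ below $\prod_{i<n}\beta_i^{-(s-\delta)}$ while still exhausting $\omega^{<\omega}$. I expect this balancing act — activating branches quickly enough to capture every $y$ yet slowly enough that each simulated $\I$ stays within the $\delta$-dimension rule — to be the main obstacle. Granting it, for every $y$ the run along the chain tracking $y$ is a legal play of the unfolded $\delta$-game in which $\II$ followed the winning $\tau$, whence $(x,y)\notin F$; as $y$ was arbitrary, $x\notin p[F]=A$, so $\tau'$ is winning for $\II$ in the $s$-game.
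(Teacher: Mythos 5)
Your proposal is correct and follows essentially the same route as the paper's proof: your ``remove $\tau$'s favorite points first'' linear orders together with Lemma~\ref{lem:linord} are exactly what the paper packages as its key Lemma~\ref{lem:extwit}, and your parallel simulations indexed by activated partial witnesses, with a shared response point chosen to be $\tau$-consistent in all of them, is the paper's construction. The ``balancing act'' you flag as the main obstacle is resolved by precisely the termwise schedule you hint at: the paper activates the $n$-th element of a dovetailed enumeration of $\omega^{<\omega}$ only once $\beta_i^{s-\delta} < \frac{1}{n}$ (possible since $\beta_i \to 0$), so that every simulated move satisfies $\abs{E_i} \geq \beta_i^{s-\delta}\abs{F_i}$ and the final computation converting the $s$-rule into the $\delta$-rule goes through exactly as you wrote it.
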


We will prove a technical lemma that will be central for the argument.  For this purpose, we need a little notation.

\begin{notation}
Let $p=(F_0, x_0, F_1, x_1, \dots, F_n, x_n)$ be a position in the Hausdorff dimension game.
Let $q=(E_0, x_0', E_1, x_1', \dots, E_n, x_n')$ be a position of the unfolded Hausdorff dimension
game in which the digits of the finite sequence $u$ have been played along
with the $E_i$ sets (in some subsequence of the rounds).
We'll call $q$ a \emph{simulation of $p$ with partial witness $u$}
if for each $i$, $E_i \subseteq F_i$ and $x_i=x_i'$.
\end{notation}
And now we are ready to state our main technical lemma
\begin{lemma}\label{lem:extwit}
Let $\tau$ be a strategy in the unfolded Hausdorff dimension game and let $p$ be a position
of the Hausdorff dimension game.  Suppose we have some finite sequence of
partial witnesses $u_0, \dots, u_n$ and a finite sequence of positions $q_0, \dots, q_n$
of the unfolded Hausdorff dimension game so that for each $i$, $q_i$ is
a simulation of $p$ with partial witness $u_i$ so that $q_i$ is consistent with $\tau$.  

Given 
\begin{enumerate}
\item any finite sequence $v_0, \dots, v_n$ so that for each $i$,
either $v_i=u_i$ or $v_i$ is an extension of $u_i$ by a single extra digit,
\item and any move $F$ for $\I$ which is legal at $p$, 
\end{enumerate}
there is some $x \in F$ so that for each $i$, there is an extension $q_i'=q_i \conc E_i \conc x$
which is a simulation of $p \conc F \conc x$ with partial witness $v_i$
so that $\abs{E_i} \geq \frac{1}{n+1} \abs{F}$, and so that $q_i'$ is also consistent with $\tau$.
\end{lemma}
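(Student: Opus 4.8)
The plan is to extract from the single strategy $\tau$ a family of $n+1$ linear orderings on the finite set $F$ that encode $\tau$'s preferences in each of the simulations, and then to apply the combinatorial Lemma~\ref{lem:linord} to locate one point $x \in F$ that is simultaneously ``high enough'' in all of these orders. Concretely, having fixed the target witnesses $v_0, \dots, v_n$ and the move $F$, for each $i \leq n$ I would define an enumeration $x^i_1, x^i_2, \dots, x^i_{\lh{F}}$ of $F$ greedily by $\tau$'s responses after successively deleting its favorite points: set $x^i_1 = \tau(q_i \conc (F, v_i))$ --- that is, $\II$'s reply when $\I$ plays all of $F$ and extends the witness from $u_i$ to $v_i$ --- and then $x^i_{k+1} = \tau(q_i \conc (F \setminus \setof{x^i_1, \dots, x^i_k}, v_i))$. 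Because $\tau$ is a strategy for $\II$, each reply lies in the set $\I$ just played, so each $x^i_{k+1}$ is a point of $F$ not previously listed; hence this is a genuine enumeration, and it defines a linear order $\preceq_i$ on $F$ by declaring $x^i_k \preceq_i x^i_{k'}$ iff $k \geq k'$.

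Next I would apply Lemma~\ref{lem:linord} to $F$ with the $n+1$ orders $\preceq_0, \dots, \preceq_n$ to obtain a single $x \in F$ with $\abs{\setof{b \in F \suchthat b \preceq_i x}} \geq \frac{1}{n+1}\lh{F}$ for every $i \leq n$. Writing $x = x^i_{k_i}$ in the $i$-th enumeration, the set below $x$ in $\preceq_i$ is exactly $\setof{x^i_{k_i}, \dots, x^i_{\lh{F}}}$, so I would take $E_i = F \setminus \setof{x^i_1, \dots, x^i_{k_i - 1}}$. By construction $\tau(q_i \conc (E_i, v_i)) = x^i_{k_i} = x$, while $\lh{E_i} = \abs{\setof{b \in F \suchthat b \preceq_i x}} \geq \frac{1}{n+1}\lh{F}$, which is the required size bound. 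Setting $q_i' = q_i \conc E_i \conc x$, with $\I$ also playing the single witness digit (if any) needed to pass from $u_i$ to $v_i$, then yields for each $i$ a simulation of $p \conc F \conc x$ with partial witness $v_i$ consistent with $\tau$.

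It remains to check legality, which is routine: each $E_i$ is a nonempty subset of $F \subseteq \mathbb{Q}^d$, hence finite and rational; it inherits the separation of $F$ since separation passes to subsets; and it lies in the ball about the last $\II$-move of $p$, because $F$ does and $q_i$, being a simulation of $p$, shares that move. The limiting ``number of choices'' rule is a tail condition on an infinite run and so imposes no constraint on a single finite extension; it is the main construction of Theorem~\ref{thm:unfthm}, not this lemma, that must track how the cumulative shrinking factors $\frac{1}{n+1}$ interact with condition~(\ref{eqn:betaspeed}).

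I expect the conceptual crux to be the first step: recognizing that ``delete $\tau$'s favorite and ask again'' converts the strategy into honest linear orders, so that the purely combinatorial pigeonhole of Lemma~\ref{lem:linord} can be invoked to force a common reply $x$ across all $n+1$ simulations at once. Everything afterward, in particular the identification of the rank $k_i$ of $x$ with the size of the playable set $E_i$, is bookkeeping.
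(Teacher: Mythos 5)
Your proof is correct and is essentially the paper's own argument: your greedy enumeration by iterated deletion of $\tau$'s favorite point is exactly the paper's rank function $r_i$ (with the indexing reversed), and both arguments then apply Lemma~\ref{lem:linord} to the resulting linear orders $\preceq_i$ and take $E_i$ to be the $\preceq_i$-initial segment whose maximum is the common point $x$, so that $\tau(q_i \conc (E_i, v_i)) = x$ with $\abs{E_i} \geq \frac{1}{n+1}\abs{F}$. The legality check you append is routine (the paper omits it), and you correctly note that the $\limsup$ rule is a tail condition handled in Theorem~\ref{thm:unfthm} rather than here.
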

\begin{proof}
We will apply Lemma~\ref{lem:linord} more or less directly to obtain this result.
Given $v_i$ either extending $u_i$ or identical to $u_i$, we have that $\tau$ at position $q_i$
induces a linear order $\preceq_i$ on $F$ by $\tau$'s preference of which point to choose
in response to the move $F$ where the extra digit (if any) of $v_i$ is offered.
More precisely, define for each $x \in F$ the rank $r_i(x)$ by:
\[r_i(x) = \abs{F} \Leftrightarrow x = \tau(q_i \conc (F, v_i))\]
\[r_i(x) = j \Leftrightarrow x = \tau(q_i \conc (F \setminus \setof{y \suchthat r(y)>j}, v_i))\]
and the linear ordering $\preceq_i$ by
\[x \preceq_i y \Leftrightarrow r_i(x)\leq r_i(y)\]

Note that by the definition of $\preceq_i$, $\tau$ will always pick the
maximal element of any $\preceq_i$-initial segment offered to it, i.e.
$\tau(q_i \conc (F_{\preceq_i z}, v_i))=z$ where $F_{\preceq_i z} = \setof{y \in F \suchthat y \preceq_i z}$.
By Lemma~\ref{lem:linord}, there is some $x \in F$ so that for each $i$, $\tau$
will pick $x$ in response to the move $E_i = \setof{y \in F \suchthat y \preceq_i x}$,
and $\abs{E_i} \geq \frac{1}{n+1}\abs{F}$.
\end{proof}

Note that in Lemma~\ref{lem:extwit}, we did not require the $u_i$ to be distinct.
This will make our application of the lemma easier, when we choose to split a partial
witness $u$ into several extensions, and still keep $u$ itself alive.

\begin{proof}[Proof of Theorem~\ref{thm:unfthm}]
Suppose $\tau$ is a winning strategy in the unfolded
$\delta$-Hausdorff dimension game, and let $s>\delta$.  We first
attempt to motivate the proof: We want to construct a strategy for
which every full run has a tree of simulations consistent with $\tau$
for all possible witnesses.  The main obstacle is to make sure that
along each branch of this tree, we've offered $\tau$ enough choices so
that the branch is not winning for trivial reasons.  Then we can use
that $\tau$ is winning to prove that $(x, y) \not \in F$ for every
$y$, thus $x \not \in A$, producing a win in the original Hausdorff
dimension game.  In order to maintain that all the simulations are
consistent with $\tau$, we need to play fewer sets when copying $\I$'s
moves, and so we need to be able to absorb the extra $\frac{1}{n}$
factor in each round that we have $n$ partial witnesses.  This is
where the fact that $\beta_i \to 0$ is critical.

Enumerate $\omega^{<\omega}$ as $\setof{w_i \suchthat i \in \omega}$ so that
if $w_j \subseteq w_i$, then $j \leq i$.
For a while, play according to $\tau$ in the $s$-Hausdorff dimension game,
playing no witness moves at all, until $\beta_i$ gets small enough so that 
\[\beta_i^{s-\delta} < \frac{1}{2}\]
at which point we can absorb a factor of $\frac{1}{2}$.
Now we apply Lemma~\ref{lem:extwit} to the current position with witnesses $u_0=u_1=\emptyset=w_0$
and $v_0=u_0$, $v_1=w_1$.  Continue play in every round afterwards applying
Lemma~\ref{lem:extwit} with $u_0=w_0=v_0$, $u_1=w_1=v_1$.
Note that this maintains the hypotheses of Lemma~\ref{lem:extwit},
so that we can continue to apply it.  We do this until $\beta_i$ is small enough so that
\[\beta_i^{s-\delta} < \frac{1}{3}\]
at which point we can absorb a factor of $\frac{1}{3}$.  We would like to add the witness $w_2$
to our list at this point, and we know that $w_2$ must extend either $w_0$ or $w_1$,
and so we apply Lemma~\ref{lem:extwit} to three witnesses $u_0=w_0$, $u_1=w_1$, $u_2=w_2 \res \lh{w_2}-1$,
in which the ancestor of $w_2$ appears twice, with $v_0=u_0$, $v_1=u_1$ and $v_2=w_2$.
It is clear that we can continue this algorithm to define a strategy in the $s$-Hausdorff dimension game.
We now demonstrate that it does the job:

Suppose $x$ is the result of our strategy, and suppose for the sake of a contradiction that $x$ was a loss for us.
In other words $x \in A$ and there exists some $c>0$ so that 
\[\limsup_{n \to \infty} \frac{\prod_{i < n} \lh{F_i}^{-1}}{{\prod_{i<n}\beta_i^s}} \leq c\]
where $F_i$ are the sets $\I$ played along the way.
By the definition of $A$, we have that for some $y \in \omega^\omega$, $(x, y) \in F$.
Let $i_0, i_1, \dots$ be the subsequence so that $w_{i_n} = y \res n$.
We have a simulation by $\tau$ in which $\I$ eventually plays all the digits of $y$, say with sets $E_i$.
Our job now is to show that
\[\limsup_{n \to \infty} \frac{\prod_{i < n} \lh{E_i}^{-1}}{{\prod_{i<n}\beta_i^\delta}} <\infty\]
Note that by the construction of our strategy, we were always able to play $E_i \subseteq F_i$ so that
\[\abs{E_i} \geq \beta_i^{s-\delta} \abs{F_i}\]
so then we have
\begin{align*}
\limsup_{n \to \infty} \frac{\prod_{i < n} \lh{E_i}^{-1}}{{\prod_{i<n}\beta_i^\delta}} & \leq  \limsup_{n \to \infty}
\frac{\prod_{i < n} (\beta_i^{s-\delta} \abs{F_i}) ^{-1}}{{\prod_{i<n}\beta_i^\delta}}
\\ & = \limsup_{n \to \infty} \frac{\prod_{i < n} \beta_i^{\delta-s} \abs{F_i} ^{-1}}{{\prod_{i<n}\beta_i^\delta}}
\\ & = \limsup_{n \to \infty} \frac{\prod_{i < n} \abs{F_i} ^{-1}}{{\prod_{i<n}\beta_i^s}} 
\\ & < \infty
\end{align*}

\end{proof}

\section{Applications of Unfolding} \label{sec:aou}

In this section we derive some consequences of the Hausdorff dimension game as well as the
unfolding theorem, Theorem~\ref{thm:unfthm}. Our first application concerns the existence of continuous
uniformizations. Recall first the situation with regards to measure and category. Assuming $\ad$, if $R \subseteq 
X\times \ww$  and $\dom(R)$ is comeager, then there is a comeager set $C\subseteq \dom(R)$
and a continuous function $f \colon C\to \ww$ which uniformizes $R$, that is, for all $x \in C$
we have $R(x,f(x))$. This continuous uniformization phenomenon is an important aspect of category
arguments and follows from an unfolding argument for category (using the Banach-Mazur game, also
known as the $**$-game). There is also a corresponding theorem for measure. Again assuming $\ad$, if $R\subseteq X\times \ww$
and $\dom(R)$ has measure $1$ with respect to some Borel probability measure $\mu$, then for any
$\epsilon >0$ there is a $A\subseteq X$ with $\mu(A)>1-\epsilon$ and a continuous $f\colon A\to \ww$
which uniformizes $R$. Our unfolding result Theorem~\ref{thm:unfthm} allows us to get a similar result for Hausdorff dimension.

\begin{theorem} \label{thm:cut}
Assume $\ad$. Suppose $R\subseteq \R^d \times \ww$ and $\dom(R)$ has Hausdorff dimension at least $\delta$.
Then for any $\delta'<\delta$ there is a $B\subseteq \dom(R)$ with $\HD(B)\geq \delta'$ and a
continuous $f \colon B\to \ww$ which uniformizes $R$.
\end{theorem}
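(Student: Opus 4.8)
The plan is to run the unfolded game for the relation $R$ at a dimension parameter just below $\delta$, use $\ad$-determinacy to obtain a winning strategy for player $\I$, and read the uniformizing function off of player $\I$'s witness moves. Fix $\delta'<\delta$ and consider the unfolded $\delta'$-Hausdorff dimension game $G^{\delta'}_{\vec\beta}$ (Definition~\ref{def:hdg}) with $F=R$, so that its target set is $p[R]=\dom(R)$. All moves in this game range over countable sets (finite sets of rational points, natural-number digits, and points chosen from a finite set), so the game is coded by a game on $\omega$ and hence is determined under $\ad$.

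First I would show that player $\II$ has no winning strategy in $G^{\delta'}_{\vec\beta}$. If she did, then by the unfolding theorem (Theorem~\ref{thm:unfthm}) player $\II$ would win the ordinary $s$-Hausdorff dimension game for every $s>\delta'$, and Theorem~\ref{thm:p2} would then give $\HD(\dom(R))\leq s$ for all such $s$, hence $\HD(\dom(R))\leq\delta'<\delta$, contradicting the hypothesis $\HD(\dom(R))\geq\delta$. By determinacy, player $\I$ therefore has a winning strategy $\sigma$ in $G^{\delta'}_{\vec\beta}$.

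Next I would extract the uniformization from $\sigma$ exactly as in the proof of Theorem~\ref{thm:p1}. Let $T$ be the finitely splitting tree of player $\II$ responses consistent with $\sigma$; then $[T]$ is compact and $\pi\colon[T]\to\R^d$, $\pi(b)=\lim_n x_n$, is continuous. Because $\sigma$ is winning it obeys all of player $\I$'s rules along every branch, so in particular every digit $y_j$ is eventually played and the witness-reading map $b\mapsto y(b)\in\ww$ is well-defined and continuous, with $(\pi(b),y(b))\in R$ for every branch $b$. The measure computation in Theorem~\ref{thm:p1} uses only the separation rule and the limiting rule on the number of $\I$-moves, both of which $\sigma$ obeys, so $K:=\pi([T])\subseteq\dom(R)$ is compact with $\HD(K)\geq\delta'$.

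The one point requiring care is the continuity of the uniformizing function, and here the separation rule does the work. If two branches first differ at round $n$, their round-$n$ points lie in the common $3\rho_n$-separated set $F_n$, while each branch's limit is within $\rho_n$ of its round-$n$ point (since $\sum_{k\geq n}(1-\beta_k)\rho_k=\rho_n$), so the two limits are at distance at least $\rho_n>0$. Thus $\pi$ is injective, and being a continuous bijection out of the compact space $[T]$ it is a homeomorphism onto $K$. Setting $B=K$ and $f=y\circ\pi^{-1}$ then yields a continuous $f\colon B\to\ww$ with $(x,f(x))\in R$ for all $x\in B$, completing the argument. I expect this continuity issue to be the main obstacle: without the built-in separation of player $\I$'s moves one would be forced to make a continuous selection among the many possible witnesses for a given point, whereas here injectivity of $\pi$ makes the selection automatic.
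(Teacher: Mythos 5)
Your proposal is correct and follows essentially the same route as the paper: play the unfolded $\delta'$-game for $F=R$, rule out a player $\II$ win via Theorem~\ref{thm:unfthm} and Theorem~\ref{thm:p2}, then run the Theorem~\ref{thm:p1} construction on player $\I$'s winning strategy and read the uniformizing function off the witness moves. Your explicit verification that $\pi$ is injective (via the $3\rho_n$-separation and the telescoping bound $\sum_{k\geq n}(1-\beta_k)\rho_k=\rho_n$) is exactly the detail the paper compresses into its assertion that each $x\in K$ comes from a unique run of $\sigma$, so you have if anything supplied a step the paper leaves implicit.
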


\begin{proof}
Fix $\delta'<\delta''<\delta$ and consider the unfolded $\delta'$-Hausdorff dimension game as in Definition~\ref{def:hdg}
for the set $A=\dom(R)$, and using $R$ for the set $F$. Here we use a fixed sequence $\{ \beta_i\}$
satisfying the conditions of Definition~\ref{def:hdg}. By $\ad$ this game is determined. If $\II$
had a winning strategy for this unfolded game, then by Theorem~\ref{thm:unfthm} $\II$ would have a
winning strategy for the (regular non-unfolded) $\delta''$-Hausdorff dimension game.
From Theorem~\ref{thm:p2} we have that $\HD(A)\leq \delta''$, a contradiction.
Thus, $\I$ has a winning strategy $\sigma$ for the unfolded $\delta'$-Hausdorff dimension game. 
Ignoring the witness moves that $\sigma$ makes, $\sigma$ gives a strategy $\bar{\sigma}$ for the
regular $\delta'$-Hausdorff dimension game for $A$. The proof of Theorem~\ref{thm:p1}
gives a compact set $K\subseteq A$ with $\HD(K)\geq \delta'$. For $x \in K$ there is a unique
run according to $\sigma$ which produces the point $x$ (that is, $\lim x_n=x$). Let $y=(y_0,y_1,\dots)$
be the sequences of witness moves played by $\sigma$ along this run. Then $R(x,y)$ as $\sigma$
is winning for $\I$. If for $x \in K$ we let $f(x)$ be this $y$, then the function $f$ is continuous on $K$
as $y \res k$ is determined by some finite part $p$ of this run by $\sigma$, and any $x'$ in $K$ which is in
the same open set determined by $x_n$ (where $n$ is the length of $p$) will have $f(x')\res k=f(x)\res k$.

\end{proof}

We note that although Theorem~\ref{thm:cut} is stated under $\ad$ as a hypothesis, the determinacy assumption
is entirely local, we just need the determinacy of the unfolded game.  So, for example, if
$R\subseteq \R^d \times \ww$ is $\bS^1_1$, then we just need the determinacy of $\bD^0_3$ games, which is theorem
of $\zf$ (the condition that each digit $y(i)$ is eventually played is a $\bP^0_2$
condition, and the $\limsup$ condition on the size of $\I$'s moves is a $\bS^0_2$ condition).
In particular, projective determinacy $\pd$ is enough to get the conclusion of
Theorem~\ref{thm:cut} for all projective relations $R$.

Within the realm of $\ad$, another important result about measure and category is the full additivity
of these notions. That is, any wellordered union (of any length) of meager sets is meager, and
likewise for measure zero sets. These results can be proved ether using the Fubini theorem (or
Kuratowski-Ulam theorem in the case of category) or by an argument using an unfolded game.
In the case of Hausdorff measure, we do not have an analog of the Fubini theorem. However,
our unfolding theorem can be used to prove the corresponding result.

\begin{theorem} \label{thm:wou}
Assume $\ad$. Then any wellordered union of subsets of $\R^d$, each of
which has Hausdorff dimension at most $\delta$, has Hausdorff dimension at most $\delta$.
\end{theorem}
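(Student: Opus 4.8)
The plan is to deduce the result from the unfolding theorem together with the two characterization theorems, reducing it to the impossibility of a player $\I$ win in a suitably chosen unfolded game. Write $A=\bigcup_{\alpha<\lambda}A_\alpha$ with $\HD(A_\alpha)\leq\delta$ for each $\alpha$. Since $\HD(A)=\inf\setof{s\suchthat \HD(A)\leq s}$, it suffices to prove $\HD(A)\leq\delta'$ for every $\delta'>\delta$ and then let $\delta'\downarrow\delta$. Fix such a $\delta'$. By Theorem~\ref{thm:unfthm} and Theorem~\ref{thm:p2}, if $\II$ has a winning strategy in the unfolded $\delta'$-Hausdorff dimension game for some $F$ with $p[F]=A$, then $\II$ wins the plain $s$-game for every $s>\delta'$, whence $\HD(A)\leq s$ for all such $s$ and so $\HD(A)\leq\delta'$. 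Thus the whole problem reduces to exhibiting an $F$ with $p[F]=A$ for which $\II$ wins the unfolded $\delta'$-game; since $\ad$ makes this game determined, it is enough to show that $\I$ has no winning strategy.

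The set $F$ I would use encodes, as the witness, a strategy that defeats the piece containing the point. First note the following consequence of Theorems~\ref{thm:p1} and~\ref{thm:p2} under $\ad$: for any $B$ and any $t>\HD(B)$, player $\II$ wins $G^t_{\vec\beta}(B)$ (otherwise $\I$ wins, and Theorem~\ref{thm:p1} produces a compact $K\subseteq B$ with $\HD(K)\geq t>\HD(B)$, absurd). In particular, since $\HD(A_\alpha)\leq\delta<\delta'$, player $\II$ has a winning strategy in $G^{\delta'}_{\vec\beta}(A_\alpha)$ for every $\alpha$. I then define $F\subseteq\R^d\times\ww$ by putting $(x,y)\in F$ exactly when $y$ codes a strategy $\tau$ for $\II$ in the plain $\delta'$-game and there is an $\alpha$ with $x\in A_\alpha$ and $\tau$ winning for $\II$ in $G^{\delta'}_{\vec\beta}(A_\alpha)$. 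The preceding remark gives $p[F]=A$: any $x\in A_\alpha$ admits such a witness $\tau$, and conversely every witnessed $x$ lies in some $A_\alpha\subseteq A$.

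The heart of the argument is to rule out a player $\I$ win in the unfolded $\delta'$-game for this $F$. Suppose $\sigma$ were such a winning strategy. The key idea is that $\I$'s own output can be turned against $\I$ by a self-reference. Given any total strategy $\rho$ for $\II$ in the plain $\delta'$-game, let $\tau(\rho)$ be the witness that $\sigma$ produces when $\I$ follows $\sigma$ and $\II$ follows $\rho$; since every digit of this witness is decided at a finite stage of the run, $\rho\mapsto\tau(\rho)$ is continuous (indeed recursive in $\sigma$) on a coding in which every index yields a total, legal $\II$-strategy. By the recursion theorem there is a fixed point $\rho^*$ with $\rho^*=\tau(\rho^*)$. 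Now play the unfolded game with $\I$ following $\sigma$ and $\II$ following $\rho^*$. Because $\sigma$ wins, all of $\I$'s rules hold and the resulting pair $(x^*,y^*)$ lies in $F$, so $y^*=\tau(\rho^*)=\rho^*$ codes a $\II$-strategy winning $G^{\delta'}_{\vec\beta}(A_{\alpha^*})$ for some $\alpha^*$ with $x^*\in A_{\alpha^*}$. But viewed as a run of the plain $\delta'$-game, this is a legal play in which $\I$ meets the $\delta'$-limsup rule (one of $\I$'s rules under $\sigma$) and in which $\II$ followed the very strategy $\rho^*$; since $x^*\in A_{\alpha^*}$, player $\I$ has won this run of $G^{\delta'}_{\vec\beta}(A_{\alpha^*})$ against the winning $\II$-strategy $\rho^*$, a contradiction.

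I expect the main obstacle to be making this self-reference fully rigorous rather than any of the dimension bookkeeping: one must fix a coding of $\II$-strategies in which the map $\rho\mapsto\tau(\rho)$ is total and continuous (so that the recursion theorem applies and produces a genuine legal strategy $\rho^*$), and one must check carefully that a play legal for $\I$'s $\delta'$-rule in the unfolded game is legal in the plain $\delta'$-game and still exhibits $\I$ as the winner of the piece's game. The remaining points --- that each $A_\alpha$ game is determined under $\ad$, that $p[F]=A$, and that the reduction through Theorems~\ref{thm:unfthm} and~\ref{thm:p2} with $\delta'\downarrow\delta$ yields $\HD(A)\leq\delta$ --- are then routine.
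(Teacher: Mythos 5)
Your global skeleton is close to the paper's: reduce the theorem to showing that $\I$ cannot win a suitably unfolded $\delta'$-game for a relation $F$ projecting onto $A$, invoke determinacy, Theorem~\ref{thm:unfthm} and Theorem~\ref{thm:p2}, and let $\delta'\downarrow\delta$. Those reduction steps, and the verification that $p[F]=A$, are fine. The fatal problem is the self-reference step. The recursion theorem does not produce the fixed point you need. If you code $\II$-strategies so that \emph{every} real codes a total legal strategy, then there is no fixed-point theorem available at all: a continuous self-map of $\ww$ need not have a literal fixed point (shift the first coordinate), and semantic fixed points $\rho_{G(y)}=\rho_y$ also fail in general (let $G(y)$ code a strategy whose first move differs from the first move of $\rho_y$; this map is continuous and fixed-point-free). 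Kleene's recursion theorem genuinely requires an acceptable numbering, hence one that includes \emph{partial} strategies; with such a coding it does give an index $e^*$ with $\Phi^{\sigma}_{e^*}=\Phi^{\sigma}_{f(e^*)}$, but nothing forces this fixed point to be total, and the natural self-referential computation (``simulate the run of $\sigma$ against myself and read off the witness digit'') diverges already at the first move, since $\sigma$ may delay witness digits past the round in which $\II$ must move. So the fixed point one actually gets is partial and yields no run of the game, hence no contradiction.

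Worse, the total fixed point you want provably cannot exist once $\sigma$ is assumed winning, so no cleverer coding can rescue the argument: for any total legal $\II$-strategy $\rho$, the run of $\sigma$ against $\rho$ is (after deleting witness moves) a legal run of the plain $\delta'$-game in which $\I$ meets the limsup rule and reaches $x^*\in A_{\alpha^*}$, while the witness $y^*$ must code a strategy winning $G^{\delta'}_{\vec\beta}(A_{\alpha^*})$ for $\II$; if that coded strategy agreed with $\rho$ along this run, we would have a run consistent with a winning $\II$-strategy that $\II$ loses, which is absurd. Thus $\sigma$, merely by being winning, is forced to output witnesses that disagree with $\II$'s actual play --- exactly the disagreement your fixed point is supposed to exclude. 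This is why the paper closes the loop by a different mechanism: it takes $\theta$ least (so the $A_\alpha$ may be assumed increasing), handles $\cof(\theta)=\omega$ separately by countable additivity of $\mathcal{H}^{\delta'}$, and for $\cof(\theta)>\omega$ uses Steel's theorem to get $\varphi\colon B\to\theta$ onto with every $\bS^1_1$ subset of $B$ bounded; taking $R(x,y)$ to mean $x\in A_{\varphi(y)}$, the set of witnesses producible by a putative winning $\sigma$ is $\bS^1_1$, hence bounded by some $\alpha<\theta$, so $\sigma$ wins the unfolded game for the single piece $A_\alpha$ and Theorem~\ref{thm:p1} gives $\HD(A_\alpha)\geq\delta'$, a contradiction. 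Boundedness, not a fixed point, is what converts ``the witness depends on the run'' into ``one piece works for every run,'' and your $F$ (witnesses as winning strategies) carries no analogous boundedness structure.
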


\begin{proof}
Let $A=\bigcup_{\alpha<\theta} A_\alpha$ where $A_\alpha \subseteq \R^d$ and $\HD(A_\alpha)\leq \delta$.
Suppose $\HD(A)>\delta$. 
We may assume $\theta$ is least so that $\HD(\bigcup_{\alpha<\theta} A_\alpha)$ has Hausdorff dimension greater than
$\delta$, and thus we may assume that the sequence $A_\alpha$ in increasing. Fix any $\delta'$ with $\delta <\delta' <\HD(A)$. 
From Theorem~\ref{thm:p2} it suffices to show that $\II$ wins the
$\delta'$-Hausdorff dimension game for $A$. Suppose not, and let $\sigma$ be a winning strategy for $\I$
in the $\delta'$-Hausdorff dimension game for $A$.

Suppose first that $\cof(\theta)=\omega$, and let $\alpha_n<\theta$ be such that $\sup_n \alpha_n=\theta$.
So, $A=\bigcup_n A_{\alpha_n}$. Since each $A_{\alpha_n}$ has Hausdorff dimension
$\leq \delta$, $\mathcal{H}^{\delta'}(A_{\alpha_n})=0$ for each $n$. As $\mathcal{H}^{\delta'}$ is a measure,
$\mathcal{H}^{\delta'}(A)=0$, a contradiction.

Suppose next that $\cof(\theta)>\omega$. Note that $\theta<\Theta$ as $\Theta$ is the supremum of the lengths
of the prewellorderings of $\R$ (or equivalently, the lengths of the increasing sequences of subsets
of $\R$). A theorem of Steel (see Theorem 1.1 of \cite{Jackson2001} for the general statement and proof)
says that, assuming $\ad$,  for any $\theta<\Theta$ with $\cof(\theta)>\omega$ there is a 
$\varphi\colon B\to \theta$ (for some set $B\subseteq \ww$)
which is onto and such that any $\bS^1_1$ set $S\subseteq B$ is bounded in the prewellordering,
that is, $\sup \{ \varphi(x) \colon x \in S\} <\theta$.
Fix such a map $\varphi$ for the ordinal $\theta$. 
Consider the relation $\R\subseteq \R^d \times \ww$ given by
\[
R(x,y) \leftrightarrow (x \in A) \wedge (y \in B) \wedge (x \in A_{\varphi(y)})
\]
Clearly $\dom(R)=A$. Consider the unfolded $\delta'$-Hausdorff dimension games for $R$. From Theorem~\ref{thm:unfthm}
we have that $\II$ cannot win this game as otherwise we would have that $\HD(A) \leq \delta'$
(since for every $\delta'' >\delta'$, from theorem~\ref{thm:unfthm}, $\II$ would win the
regular $\delta''$ game for $A=\dom(R)$, and so $\HD(A)\leq \delta''$). 
As we are assuming $\ad$, we may fix a winning strategy $\sigma$ for $\I$ in this unfolded game.
Let $\sigma_1$ be the strategy which extracts the $y=(y_0,y_1,\dots)$ moves from the play by $\sigma$.
Let $S=\sigma_1 [\ww]$, more precisely, let $S$ collect all of the $y$'s which come from a any run of $\sigma_1$
in which $\II$ has followed the rules of the game. Clearly $S$ is $\bS^1_1$, and so there is an
$\alpha <\theta$ such that $\sup \{\varphi(y)\colon y \in S\}<\alpha$. This says that
$\I$ wins the unfolded $\delta'$-Hausdorff dimension game for the set $A_\alpha$. Thus, $\I$
wins the regular $\delta'$-Hausdorff dimension game for $A_\alpha$ and so $\HD(A_\alpha)\geq \delta'$, a
contradiction.

\end{proof}

The arguments of the current paper naturally suggest two questions. First, can we
get a characterization for when player $\I$ or $\II$ wins $G^\delta_{\vec \beta}(A)$
when $\HD(A)=\delta$? As we noted in Example~\ref{ex1}, either player could win in this case
(just given $\HD(A)=\delta$). Second, to which class of metric spaces can we extend our
basic results (Theorems~\ref{thm:p1},\ref{thm:p2}, and \ref{thm:unfthm})?

\bibliographystyle{amsplain}

\bibliography{HDG}

\end{document}